\newtheorem{theorem}{Theorem}
\newtheorem{problem}{Problem}
\newtheorem{lemma}{Lemma}
\begin{document}

\title{\large\bf On the elementary symmetric functions of $1, 1/2, \ldots , 1/n$\footnote{This work was supported by the National
Natural Science Foundation of China, Grant Nos. 11071121 and
10901002.} \footnote{ Email: ygchen@njnu.edu.cn; tmzzz2000@163.com
}}
\date{}
\author{Yong-Gao Chen\\ \small School of Mathematical Sciences and Institute of Mathematics,
\\ \small Nanjing
Normal
University, Nanjing 210046, P. R. CHINA\\
  Min Tang\\ \small Department of Mathematics, Anhui Normal University,\\
\small Wuhu 241000, P. R. CHINA}
 \maketitle

\begin{abstract} In 1946, P. Erd\H os and I. Niven proved that
there are only finitely many positive integers $n$ for which one
or more elementary symmetric functions of $1, 1/2, \ldots , 1/n$
are integers. In this paper we solve this old problem by showing
that if $n\ge 4$, then none of elementary symmetric functions of
$1, 1/2, \ldots , 1/n$ is an integer.
\end{abstract}

2000 Mathematics Subject Classification: 11B83,11B75

{\bf Keywords:} elementary symmetric function, harmonic series.

 \section{Introduction}

It is well known that for any integer $n>1$, the harmonic sum
$\sum_{i=1}^n \frac 1i$ is not an integer. In 1946, P. Erd\H os and
I. Niven \cite{ErdosNiven}
 proved that there are only finitely many
positive integers $n$ for which one or more elementary symmetric
functions of $1, 1/2, \ldots , 1/n$ are integers.

In this paper we solve this old problem by showing that
$1/2+1/3+1/6$ is the only case. Let $S(k,n)$ denote the $k-$th
elementary symmetric functions of $1, 1/2, \ldots , 1/n$. Then
$$S(1,1)=1,\quad S(1, 2)=\frac 32,\quad S(2,2)=\frac 12,$$
$$S(1, 3)=\frac{11}6, \quad S(2,3)=1,\quad S(3,3)=\frac 16.$$
For $n\ge 4$ we have the following result:

\begin{theorem}\label{mainthm} Let $k,n$ be positive integers with
$n\ge 4$ and $n\ge k\ge 1$. Then $S(k,n)$ is not an
integer.\end{theorem}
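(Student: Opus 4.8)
The plan is to show that for every admissible $(k,n)$ there is a prime $p$ with $v_p\big(S(k,n)\big)<0$, where $v_p$ is the $p$-adic valuation; this rules out $S(k,n)\in\mathbb Z$. The starting point is the identity $e_k(1,1/2,\dots,1/n)=e_{n-k}(1,2,\dots,n)/n!$, so that
$$S(k,n)=\frac{1}{n!}\,[x^{k}]\prod_{i=1}^{n}(x+i),$$
and it suffices to find a prime $p\le n$ with $v_p\!\big([x^{k}]\prod_{i=1}^{n}(x+i)\big)<v_p(n!)$. Reducing $\prod_{i=1}^{n}(x+i)$ modulo $p$: if $m=\lfloor n/p\rfloor$ and $r=n-mp$, the multiples of $p$ contribute a factor $\equiv x^{m}$ and the non-multiples contribute $m$ full residue blocks together with a tail, so by $\prod_{j=1}^{p-1}(x+j)\equiv x^{p-1}-1\pmod p$ one obtains
$$\prod_{i=1}^{n}(x+i)\ \equiv\ x^{m}\,(x^{p-1}-1)^{m}\prod_{j=1}^{r}(x+j)\pmod p .$$
Since $v_p(n!)\ge m\ge 1$ for $p\le n$, it is enough to produce, for each $k$, a prime $p\le n$ for which the coefficient of $x^{k-m}$ in $(x^{p-1}-1)^{m}\prod_{j=1}^{r}(x+j)$ is a unit of $\mathbb F_p$ (when $k<m$ a finer valuation count of the multiples-of-$p$ contribution replaces this by the condition $p\nmid e_{m-k}(1,\dots,m)$). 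For a fixed $p$ the possibly nonzero coefficients lie in bands $k\in[m+t(p-1),\,m+t(p-1)+r]$, $0\le t\le m$, which span $[m,n]$; the two difficulties are that these bands may leave gaps when $r$ is small, and that the relevant coefficient — a Stirling number of the first kind — may be divisible by $p$.

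I would then split on the size of $k$. For $k$ small (up to about $\sqrt n$) one takes any prime $p\in(n/(k+1),\,n/k]$: then $m=k$, the relevant coefficient is the constant term $(-1)^{k}(n\bmod p)!$, which is prime to $p$ because $n\bmod p<p$, so all that is needed is that such a prime exists — automatic for $n$ beyond an explicit bound and verified directly below it. For $k$ not too close to $n/2$ one uses a prime $p\in(n/2,n]$ with $p\le\max(k,\,n-k)$ (such a prime exists as soon as $\max(k,n-k)$ exceeds $n/2$ by enough room): then $m=1$ and the criterion collapses to $p\nmid e_{*}(1,\dots,n-p)$ with $n-p<p$, which holds apart from sporadic exceptions such as $n=2p-1$, dispatched with a neighbouring prime. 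This leaves only the $k$ lying within (roughly) a prime-gap of $n/2$; for those finitely-many-per-$n$ values I would use small primes. The mod-$2$ reduction already gives the nonzero coefficient $(-1)^{\lfloor n/2\rfloor}$ for $k=\lfloor n/2\rfloor$; for the neighbouring $k$ the mod-$2$ coefficient is, up to the parity tail, $\pm\binom{\lfloor n/2\rfloor}{k-\lfloor n/2\rfloor}$, and when this vanishes one either analyses $v_2(S(k,n))$ directly through the $2$-adic factorisation of $\prod_{i\le n}(x+i)$ (the even factors contributing $2^{\lfloor n/2\rfloor}\prod_{l\le\lfloor n/2\rfloor}(x+2l)$ and a half-sized recursion), or passes to $p=3,5,\dots$, where non-vanishing of the corresponding coefficient of $(x^{p-1}-1)^{m}\prod_{j\le r}(x+j)$ is a statement about base-$p$ digits of $\lfloor n/p\rfloor$, of $k$, and of the tail degree.

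The main obstacle is precisely this interaction in the central band: one must simultaneously secure (i) a prime positioned so that $\lfloor n/p\rfloor$ takes the right value with $r$ large enough that the bands overlap rather than leave gaps, which forces explicit estimates on $\pi(x)$ or on prime gaps going beyond bare Bertrand, and (ii) non-vanishing modulo that prime of the pertinent coefficient of $(x^{p-1}-1)^{m}\prod_{j\le r}(x+j)$; since a prime adjusted to fix (i) may spoil (ii), the casework has to be organised so that every $k$ is caught by at least one prime meeting both demands. Because the explicit bounds only engage for $n\ge N_0$, the argument is completed by a direct check of all $(k,n)$ with $4\le n\le N_0$; the single surviving integer value $S(2,3)=1$ shows that the hypothesis $n\ge 4$ cannot be weakened.
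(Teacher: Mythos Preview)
Your $p$-adic framework is sound, and the small-$k$ argument (choose a prime $p$ with $\lfloor n/p\rfloor=k$, so that the coefficient of $x^k$ in $\prod_i(x+i)$ reduces to $(-1)^k r!$ modulo $p$) is essentially correct. The gap is everything past ``up to about $\sqrt n$'': the treatment of the range $\sqrt n\lesssim k\lesssim n-\sqrt n$ is only a plan, not a proof. For a prime $p\in(n/2,n]$ with $m=1$ you need $e_{r-k+1}(1,\dots,r)\not\equiv e_{r-k+p}(1,\dots,r)\pmod p$ with $r=n-p<p$; you note this can fail (e.g.\ $n=2p-1$) and propose to ``dispatch with a neighbouring prime'', but give no argument that a neighbouring prime always works. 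The central band $k\approx n/2$ is handled even more loosely, with alternatives (``one either analyses $v_2$ directly \dots\ or passes to $p=3,5,\dots$'') stated without verification. As written, the casework is not closed.

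The paper avoids this difficulty entirely by a size estimate you do not use: from
\[
S(k,n)\le\frac{1}{k!}\Bigl(\sum_{i=1}^n\frac1i\Bigr)^{k}
\]
one gets $S(k,n)<1$ whenever $k\ge e\log n+e$. Hence only $k=O(\log n)$ ever needs the arithmetic argument, and for such $k$ the interval $(n/(k+4),\,n/k]$ is long enough (via explicit $\pi(x)$ bounds for large $n$, and direct checking below a threshold) to contain a prime $p>k+4$ with $p\nmid 3k+8$; the paper then computes $S(k,k+t)$ for $t\le3$ to see that this prime forces $v_p(S(k,n))<0$. In other words, your hard ``central band'' never arises once you observe that large $k$ give $S(k,n)<1$. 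If you add that size bound at the outset, your own small-$k$ mechanism (perhaps widened from $(n/(k+1),n/k]$ to $(n/(k+4),n/k]$ to ease the prime-existence step) already finishes the proof.
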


We pose a problem here for further research.

\begin{problem} Find all finite arithmetic progressions $\{
a+mi\}_{i=0}^n$ such that  one or more elementary symmetric
functions of $1/a, 1/(a+2m), \ldots , 1/(a+nm)$ are
integers.\end{problem}

\section{Proofs}

The proof in \cite{ErdosNiven} can give an upper bound of $n$
which is still too large. We improve the proof. First we give
several lemmas.

\begin{lemma}\label{lem1}
$$\sum_{1\le i\le k+1} i=\frac 12 (k+1)(k+2),$$
$$\sum_{1\le i<j\le k+2} ij= \frac 1{24}(k+1)(k+2)(k+3)(3k+8)$$ and
$$\sum_{1\le i<j<s\le k+3} ijs =\frac 1{48}(k+1)(k+2)(k+3)^2(k+4)^2.$$
 \end{lemma}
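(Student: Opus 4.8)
The three left-hand sides are exactly the first, second, and third elementary symmetric functions of the list $1,2,\dots,N$, evaluated at $N=k+1$, $N=k+2$, and $N=k+3$ respectively. So the plan is to express each of them through the power sums $P_r(N)=\sum_{i=1}^{N} i^{\,r}$ via Newton's identities, and then substitute the classical closed forms
\[
P_1(N)=\frac{N(N+1)}{2},\qquad P_2(N)=\frac{N(N+1)(2N+1)}{6},\qquad P_3(N)=\left(\frac{N(N+1)}{2}\right)^{2}.
\]
The first identity is literally $P_1(k+1)$, so nothing is needed there.

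For the second identity I would use $2\sum_{1\le i<j\le N} ij = P_1(N)^2-P_2(N)$. Writing the right side over a common denominator, one pulls the factor $\tfrac{N(N+1)}{12}$ out and is left with the bracket $3N(N+1)-2(2N+1)=3N^2-N-2=(N-1)(3N+2)$, so that $\sum_{1\le i<j\le N} ij=\tfrac{N(N+1)(N-1)(3N+2)}{24}$; setting $N=k+2$ (hence $N+1=k+3$, $N-1=k+1$, $3N+2=3k+8$) yields the stated product. For the third identity, use $6\sum_{1\le i<j<s\le N} ijs = P_1(N)^3-3P_1(N)P_2(N)+2P_3(N)$. Since $P_3(N)=P_1(N)^2$, this equals $P_1(N)\bigl(P_1(N)^2+2P_1(N)-3P_2(N)\bigr)$; pulling $\tfrac{N(N+1)}{4}$ out of the inner bracket reduces it to $N(N+1)-2(2N+1)+4=N^2-3N+2=(N-1)(N-2)$, giving $\sum_{1\le i<j<s\le N} ijs=\tfrac{N^2(N+1)^2(N-1)(N-2)}{48}$, which at $N=k+3$ is precisely $\tfrac{1}{48}(k+1)(k+2)(k+3)^2(k+4)^2$.

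There is no real obstacle here beyond careful polynomial bookkeeping; the only point demanding attention is the re-indexing each time one formula is fed into the next. An equally clean self-contained alternative, avoiding any appeal to Newton's identities, is induction on $k$ in which one peels off the terms containing the largest index: passing from $k$ to $k+1$ in the second identity adds $(k+3)\sum_{i=1}^{k+2} i$, and after using the first identity the remaining check is the single polynomial identity $(k+4)(3k+11)-(k+1)(3k+8)=12(k+3)$; passing from $k$ to $k+1$ in the third identity adds $(k+4)\sum_{1\le i<j\le k+3} ij$, and after using the (already established) second identity the remaining check is $(k+5)^2-(k+1)(k+3)=2(3k+11)$. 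In either approach a numerical spot check at $k=1$ (the values $3$, $11$, $50$) confirms that the constants are right.
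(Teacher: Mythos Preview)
Your proof is correct in both variants: the Newton-identity computation and the inductive peeling argument each check out line by line, and the numerical spot check at $k=1$ is accurate. The paper itself omits the proof entirely, remarking only that one may deduce the formulas by hand or by Mathematica; your write-up therefore supplies strictly more than the paper does, and either of your two approaches would serve perfectly well as the missing justification.
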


Proof is omitted. One may deduce these formulas by oneself or by
Mathematica.

\begin{lemma}\label{lem2}(Panaitopol \cite{Panaitopol}) $$\pi(x)<\frac{x}{\log x-1-(\log x)^{-1/2}}\quad \text{ for all } x\geq 6 $$ and
$$\pi(x)>\frac{x}{\log x-1+(\log x)^{-1/2}} \quad \text{ for all }x\geq 59 .$$\end{lemma}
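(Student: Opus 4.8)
The plan is to reduce both inequalities to a single two-sided estimate and then feed it with the asymptotic expansion of $x/\pi(x)$. First I would record that the denominators are positive on the stated ranges: at $x=6$ one has $\log 6-1-(\log 6)^{-1/2}>0$, and both $\log x-1\mp(\log x)^{-1/2}$ are increasing for $x\ge 6$, so clearing denominators is legitimate. The first inequality is then equivalent to $\frac{x}{\pi(x)}>\log x-1-(\log x)^{-1/2}$ and the second to $\frac{x}{\pi(x)}<\log x-1+(\log x)^{-1/2}$. Writing $f(x):=\frac{x}{\pi(x)}-(\log x-1)$, the whole lemma becomes the assertion that $f(x)>-(\log x)^{-1/2}$ for $x\ge 6$ and $f(x)<(\log x)^{-1/2}$ for $x\ge 59$.

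The engine is the classical asymptotic expansion. Integrating $\mathrm{li}(x)=\int_2^x dt/\log t$ by parts repeatedly gives $\mathrm{li}(x)=\frac{x}{\log x}\bigl(1+\frac{1}{\log x}+\frac{2}{(\log x)^2}+\cdots\bigr)$, and inverting this power series in $1/\log x$ yields $\frac{x}{\mathrm{li}(x)}=\log x-1-\frac{1}{\log x}-\frac{3}{(\log x)^2}-\cdots$. Combining this with an explicit form of the prime number theorem, $\pi(x)=\mathrm{li}(x)+E(x)$ with $E(x)=O\bigl(x\exp(-c\sqrt{\log x})\bigr)$ (or a Rosser--Schoenfeld/Dusart-type explicit bound), and using $\frac{x}{\pi(x)}-\frac{x}{\mathrm{li}(x)}=-\frac{x\,E(x)}{\pi(x)\,\mathrm{li}(x)}$ with $\pi(x),\mathrm{li}(x)\sim x/\log x$, transfers the expansion to $f(x)=-\frac{1}{\log x}-\frac{3}{(\log x)^2}-\cdots+O\bigl(\tfrac{(\log x)^2}{x}\,|E(x)|\bigr)$. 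Since the transferred error decays faster than any power of $1/\log x$, the upshot is $f(x)=-\frac{1}{\log x}+O\bigl((\log x)^{-2}\bigr)$ with an \emph{explicit} constant; in particular $f(x)<0$ and $|f(x)|\le C/\log x$ for a concrete $C$ and all $x\ge x_0$.

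Because $C/\log x<(\log x)^{-1/2}$ as soon as $(\log x)^{1/2}>C$, both inequalities hold for all $x\ge x_0$ with $x_0$ explicit. The remaining obstacle --- and the genuinely laborious part --- is the moderate range $6\le x\le x_0$ (respectively $59\le x\le x_0$), where one cannot rely on the asymptotics and must verify the finitely many inequalities directly. This is manageable because $\pi$ is a step function: on each interval $[p,q)$ between consecutive primes $\pi(x)$ is constant while $\log x-1\mp(\log x)^{-1/2}$ is monotone, so it suffices to test the inequalities at the breakpoints and at the right ends of the intervals, a finite tabulation. I expect the main difficulty to be securing a PNT error term $E(x)$ that is simultaneously explicit and strong enough to make $x_0$ small, since a weaker analytic bound forces a larger computational range; calibrating this trade-off between the tail estimate and the size of the finite check is the crux of the proof.
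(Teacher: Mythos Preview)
The paper does not prove this lemma: it is quoted directly from Panaitopol \cite{Panaitopol} and used as a black box, so there is no proof in the paper to compare your attempt against.

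That said, your sketch is a reasonable route and is in the spirit of how such explicit $\pi(x)$ inequalities are established in the literature (Rosser--Schoenfeld, Dusart, Panaitopol himself): an analytic bound valid beyond an explicit threshold $x_0$, combined with a finite verification below $x_0$. Two comments on the outline. First, the step ``explicit PNT error $E(x)=O(x\exp(-c\sqrt{\log x}))$ with concrete constants'' is itself a substantial theorem; in practice one would not redo this from scratch but would instead invoke already-proven explicit bounds on $\pi(x)$ or $\theta(x)$, which short-circuits much of the expansion and transfer work you describe. Second, your finite-check paragraph is slightly loose: on an interval between consecutive primes the quantity $\pi(x)\bigl(\log x-1\mp(\log x)^{-1/2}\bigr)-x$ is not obviously monotone in $x$ for small $x$, so testing only at endpoints requires an additional word about where the extremum lies. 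This remains a finite mechanical task, just needing a bit more care than stated.
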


\begin{lemma}\label{lem3} Let $k,n$ be positive integers with
 $n\ge k\ge e\log n+e $. Then $S(k,n)$ is not an
integer.\end{lemma}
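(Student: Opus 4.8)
The plan is to show that, under the hypothesis $k\ge e\log n+e$, the number $S(k,n)$ already lies strictly between $0$ and $1$, so it cannot be an integer. Notably this needs neither the preceding lemmas nor any prime‑divisibility argument; it is a pure size estimate. The governing heuristic is that the coefficients of $\sum_k S(k,n)x^k=\prod_{i=1}^n(1+x/i)$ are concentrated near $k\approx \log n$, so once $k$ exceeds a constant multiple of $\log n$ we should have $S(k,n)<1$.

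The first step is the symmetrization inequality
\[
k!\,S(k,n)\ \le\ \Big(\sum_{i=1}^{n}\frac1i\Big)^{k}.
\]
This follows by expanding the right‑hand side as $\sum_{(j_1,\dots,j_k)\in\{1,\dots,n\}^k}\frac{1}{j_1\cdots j_k}$: the $k$‑tuples with pairwise distinct entries contribute exactly $k!\,S(k,n)$ (each $k$‑element subset of $\{1,\dots,n\}$ occurring in $k!$ orderings), while every remaining tuple contributes a positive amount. Writing $H_n=\sum_{i=1}^n 1/i$, this says $S(k,n)\le H_n^{\,k}/k!$.

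The second step inserts two routine estimates: the comparison $H_n\le 1+\log n$ (compare the sum with $\int_1^n dt/t$), and the Stirling‑type bound $k!>(k/e)^k$, which comes from $\log k!=\sum_{j=1}^k\log j\ge\int_1^k\log t\,dt=k\log k-k+1>k\log(k/e)$ and holds for all $k\ge1$. The hypothesis $k\ge e\log n+e=e(1+\log n)$ rewrites as $1+\log n\le k/e$, so
\[
H_n^{\,k}\ \le\ (1+\log n)^k\ \le\ (k/e)^k\ <\ k!,
\]
whence $S(k,n)\le H_n^{\,k}/k!<1$. Since $k\le n$, the sum defining $S(k,n)$ is nonempty with all terms positive, so $S(k,n)>0$; thus $S(k,n)\in(0,1)$ and is not an integer.

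I do not expect a genuine obstacle here; the only delicate point is that the constant $e$ in the hypothesis is exactly what is needed for the Stirling bound $k!>(k/e)^k$ to absorb $H_n^k\le(1+\log n)^k$, so the three ingredients have to be combined in precisely this form. All of them ($k!\,S(k,n)\le H_n^k$, $H_n\le 1+\log n$, $k!>(k/e)^k$) are valid for every $n\ge k\ge1$, so the displayed chain is unconditional once $e\log n+e\le k\le n$ — and for small $n$ no such $k$ exists, making the lemma vacuous there, which causes no difficulty. A sharper Stirling estimate would not enlarge the range covered by the lemma, so none is needed.
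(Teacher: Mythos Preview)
Your proof is correct and essentially identical to the paper's: both bound $S(k,n)\le H_n^k/k!$, use $H_n<1+\log n$ and $\log k!>\int_1^k\log t\,dt=k\log k-k$, and then invoke the hypothesis $k\ge e(1+\log n)$ to get $H_n^k<k!$, hence $S(k,n)<1$. The only cosmetic differences are that you spell out why $k!\,S(k,n)\le H_n^k$ and explicitly note $S(k,n)>0$, while the paper leaves these implicit.
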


\begin{proof} As in P. Erd\H os and
I. Niven \cite{ErdosNiven}, we have
$$S(k,n)\le \frac 1{k!} \left(\sum_{i=1}^n \frac 1i\right)^k.$$
We have
$$\sum_{i=1}^n \frac 1i<1+\sum_{i=2}^n \int_{i-1}^i \frac
1x dx=1+\int_1^n \frac 1x dx=\log n+1$$ and
$$\log k!=\sum_{i=2}^k\log i>\int_1^k \log x dx>k\log k-k\ge k\log (\log n+1),$$
where we use the condition $k\ge e\log n+e $ in the last step. So
$$ \left(\sum_{i=1}^n \frac 1i\right)^k<k!.$$
Hence $S(k,n)<1$. This completes the proof of Lemma
\ref{lem3}.\end{proof}

\begin{lemma}\label{lem4} Let $k,n$ be positive integers with
 $n\ge k>1$. If there exists a prime $p$ such that $$\frac n{k+4}<p\le \frac n {k},\quad p>k+4, p\nmid
3k+8,$$ then $S(k,n)$ is not an integer.\end{lemma}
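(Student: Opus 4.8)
The plan is to pin down the $p$-adic valuation of $S(k,n)$ and show it is negative. Write $m=\lfloor n/p\rfloor$ for the number of multiples of $p$ in $\{1,2,\dots,n\}$. From $\frac n{k+4}<p\le\frac nk$ we get $kp\le n<(k+4)p$, hence $m\in\{k,k+1,k+2,k+3\}$ and the multiples of $p$ not exceeding $n$ are precisely $p,2p,\dots,mp$. Since $p>k+4>m$, none of $1,2,\dots,m$ is divisible by $p$, and none of $p,2p,\dots,mp$ is divisible by $p^2$; therefore, in
$$S(k,n)=\sum_{1\le i_1<\dots<i_k\le n}\frac1{i_1i_2\cdots i_k},$$
each factor $1/i$ with $p\mid i$ has $v_p=-1$ and each factor with $p\nmid i$ has $v_p=0$.

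Next I would group the $k$-subsets $\{i_1,\dots,i_k\}$ according to the number $j$ of their elements that are divisible by $p$, and let $G_j$ be the corresponding partial sum, so that $S(k,n)=\sum_{j=0}^kG_j$. By the previous paragraph every term of $G_j$ has $v_p$ equal to exactly $-j$, so $v_p(G_j)\ge-j$; in particular $v_p(G_j)\ge-(k-1)$ for $0\le j\le k-1$. For the top term, factoring $p$ out of each of the $k$ chosen multiples gives
$$G_k=\sum_{\substack{A\subseteq\{1,\dots,m\}\\|A|=k}}\ \prod_{a\in A}\frac1{ap}=\frac1{p^k}\,S(k,m),$$
and from the identity $\sum_{k\ge0}S(k,m)t^k=\prod_{a=1}^m\bigl(1+\tfrac ta\bigr)=\binom{t+m}{m}$ one reads off $S(k,m)=e_{m-k}(1,2,\dots,m)/m!$, where $e_r$ denotes the $r$-th elementary symmetric function of its arguments. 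Because $m<p$ we have $v_p(m!)=0$, so $v_p(G_k)=-k+v_p\bigl(e_{m-k}(1,2,\dots,m)\bigr)$.

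It then remains to check $p\nmid e_{m-k}(1,2,\dots,m)$ in the four cases $m-k\in\{0,1,2,3\}$. The case $m-k=0$ is immediate, and for $m-k=1,2,3$ I would substitute $m=k+1,k+2,k+3$ into the three closed forms of Lemma~\ref{lem1}, obtaining $\tfrac12(k+1)(k+2)$, $\tfrac1{24}(k+1)(k+2)(k+3)(3k+8)$, and $\tfrac1{48}(k+1)(k+2)(k+3)^2(k+4)^2$ respectively. Here $p$ divides none of the linear factors $k+1,k+2,k+3,k+4$ since $p>k+4$; $p$ divides neither $2$, $24$, nor $48$, since $k>1$ forces $p\ge7$; and $p\nmid 3k+8$ by hypothesis. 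Hence $p\nmid e_{m-k}(1,2,\dots,m)$, so $v_p(G_k)=-k$. Since $v_p(G_k)=-k<-(k-1)\le v_p(G_j)$ for every $j<k$, the ultrametric inequality yields $v_p(S(k,n))=v_p(G_k)=-k<0$, and $S(k,n)$ is not an integer. The only real work is the four-way case analysis of the last paragraph, which is exactly where Lemma~\ref{lem1} and the hypotheses $p>k+4$ and $p\nmid 3k+8$ get consumed; everything else is routine $p$-adic bookkeeping.
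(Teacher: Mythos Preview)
Your proof is correct and follows essentially the same approach as the paper: both decompose $S(k,n)$ according to how many of the chosen indices are divisible by $p$, identify the ``deepest'' piece as $p^{-k}S(k,m)$ with $m=\lfloor n/p\rfloor\in\{k,k+1,k+2,k+3\}$, and then use the closed forms of Lemma~\ref{lem1} together with $p>k+4$ and $p\nmid 3k+8$ to verify that this piece has $p$-adic valuation exactly $-k$. Your write-up is somewhat more explicit about the $p$-adic bookkeeping (using $v_p$ and the ultrametric inequality) and about why no element of $\{1,\dots,n\}$ is divisible by $p^2$, but the argument is the same.
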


\begin{proof} Let $p(k+t)\le n<p(k+t+1)$.
It is clear that
$$S(k,n)=\frac 1{p^{k}} S(k, k+t)+\frac{b}{p^{k-1}c},
\quad p\nmid c,$$ where $b,c$ and the following $a, d$ are positive
integers. Since
$$S(k,k)=\frac 1{k!},$$
$$S(k, k+1)=\frac 1{(k+1)! } \sum_{1\le i\le k+1}i = \frac {k+2}{2 (k!)},$$
$$S(k, k+2)=\frac 1{(k+2)!} \sum_{1\le i<j\le k+2}ij = \frac {(k+3)(3k+8)}{24
(k!)}$$ and
$$S(k, k+3)=\frac 1{(k+3)! } \sum_{1\le i<j<s\le k+3} ijs =\frac {(k+3)(k+4)^2}{48 ( k!)},$$
by the conditions we know that $1\le t\le 3$ and $$ S(k,
k+t)=\frac{d}{a}, \quad p\nmid d.$$ Thus
$$S(k,n)=\frac 1{p^{k}} S(k, k+t)+\frac{b}{p^{k -1} c}=\frac{d}{p^k a}+\frac{b}{p^{k-1} c}$$ is
not an integer.

This completes the proof of Lemma \ref{lem4}.\end{proof}

\begin{proof}[Proof of Theorem \ref{mainthm}] It is well known that for any integer $n>1$, the harmonic sum
$\sum_{i=1}^n \frac 1i$ is not an integer. So we may assume that
$k\ge 2$.

By Lemma \ref{lem3} we may assume that $k< e\log n+e$. By Lemma
\ref{lem4} if  there exists a prime $p$ such that
\begin{equation}\label{eqn2}\frac n{k+4}<p\le \frac n k,
\quad p>k+4, p\nmid 3k+8,\end{equation} then Theorem \ref{mainthm}
is true.

First we assume that $n\ge 300000$. Suppose that there exists a
prime $p$ such that
 $$\frac n{k+4}<p\le \frac n k.$$
Now we show that $p>3k+8$. It is sufficient to prove that
$$\frac n{k+4}\ge 3k+8.$$
That is, $n\ge (3k+8)(k+4)$. Since $2\le k\le e\log n+e$, it
suffices to prove that
$$n\ge (3e\log n+3e+8)(e\log n+e+4).$$
Let
$$f(x)=x-(3e\log x+3e+8)(e\log x+e+4).$$
Since
$$ f(300000)>0, \quad f'(x)=1-\frac{6e^2\log x}x-\frac{6e^2+20e}{x}>0$$  for all
$x>300000$, we have $f(x)>0$ for all $x\ge 300000$. Thus it is
enough to prove that
\begin{equation}\label{eqn2}\pi (\frac nk)> \pi (\frac
n{k+4}).\end{equation} Since $n>300000$ and $2\le k\le e\log n+e$,
we have
$$\frac nk>\frac n{k+4}> \frac n{e\log n+e+4}>59.$$
By Lemma \ref{lem2} it suffices to prove that
\begin{eqnarray*}&&\frac{n/k}{\log (n/k) -1+(\log
(n/k))^{-1/2}}\\
&>&\frac{n/(k+4)}{\log (n/(k+4)) -1-(\log
(n/(k+4)))^{-1/2}}.\end{eqnarray*} That is,
\begin{eqnarray*}&&k\left( \log \frac nk -1+(\log
\frac nk)^{-1/2} \right) \\
&<& (k+4) \left( \log \frac n{k+4} -1-( \log \frac n{k+4})^{-1/2}
\right).\end{eqnarray*} That is,
$$k\log \left(1+\frac 4k\right) +4+k \left(\log
\frac nk \right)^{-1/2} +(k+4) \left( \log \frac
n{k+4}\right)^{-1/2}<4\log \frac n{k+4}.$$ Since
$$k\log \left(1+\frac 4k\right)<4, \quad  \log
\frac nk >\log \frac n{k+4},$$ it is enough to prove that
 \begin{equation}\label{eqn3}8+(2k+4) \left( \log \frac n{k+4}\right)^{-1/2}<4 \log \frac
n{k+4}.\end{equation} Let $g(x)=x^{0.3}-e\log x-e-4$. Since
$$g(300000)>0,\quad  xg'(x)=0.3x^{0.3}-e>0$$ for all $x\ge 300000$,
we have $g(x)>0$ for all $x\ge 300000$. Thus
$$\frac n{k+4}\ge \frac n{e\log n+e+4}> n^{0.7},\quad n\ge 300000.$$
By \eqref{eqn3} it is enough to prove
$$8+(2e\log n+2e+4)\left(\log n^{0.7}\right)^{-1/2}\le 4\log n^{0.7},\quad n\ge 300000.$$
That is, \begin{equation}\label{eqn1}8\times 0.7^{1/2} (\log
n)^{1/2} +2e\log n+2e+4\le 4\times 0.7^{3/2} (\log
n)^{3/2}.\end{equation} Let $t=(\log n)^{1/2}$. One may find that
the above inequality is true for $t>3.47603$. So the  inequality
\eqref{eqn1} is true for $n\ge 176802$. Thus \eqref{eqn2} is true
for all $n\ge 300000$.

Now we assume that $n<300000$. By $ k\le e\log n+e$ we have $k\le
37$ and $n\ge e^{k/e-1}$. By Lemma \ref{lem4} we need only to find
a prime $p$ such that
$$kp\le n<(k+4)p, \quad p>k+4, p\nmid 3k+8.$$
In the following $p_i$ denote the $i$-th prime.

{\bf Case 1:} $25\le k\le 37$. Then $139=p_{34}<\frac 1k e^{k/e-1}$.
That is, $kp_{34}<e^{k/e-1}$. If $p$ is a prime with $p\ge 139$,
then $p>k+4$ and $p\nmid 3k+8$. Since $(k+4)p_{1271}\ge
29p_{1271}>300000$, we need only to verify that $kp_{i+1}<(k+4)p_i$
for $34\le i\le 1270$. This can be verified by Mathematica.

{\bf Case 2:} $2\le k\le 24$. We have $(k+4)p_{5134}>300000$. By
Mathematica we can verify that $kp_{i+1}<(k+4)p_i$ for $21\le i\le
5134$. If $73k=kp_{21}\le n<300000$, then there exists a prime $p$
such that $kp\le n<(k+4)p$.  It is easy to verify that for any prime
$p\ge 73$ we have $p>k+4, p\nmid 3k+8$. The remainder cases are
$n\le 73k-1\le 1751$ and $2\le k\le 24$.

It is clear that
$$S(1,1)=1,\quad S(1,n)=S(1,n-1)+\frac 1n,$$ $$ S(n,n)=\frac 1n S(n-1,n-1),$$
$$S(k,n)=S(k,n-1)+\frac 1n S(k-1, n-1),\quad k=2,\dots , n-1.$$
By Mathematica we can verify that $S(k, n)$ is not an integer for
either $4\le n\le 30$ and $2\le k\le n$, or $31\le n\le 1751$ and
$2\le k\le 24$.

This completes the proof of Theorem \ref{mainthm}.\end{proof}

\end{document}